\newtheorem{theorem}{Theorem}[section]
\newtheorem{lemma}[theorem]{Lemma}
\theoremstyle{definition}
\theoremstyle{remark}
\numberwithin{equation}{section}
\numberwithin{equation}{section}
\begin{document}

\title{On the Betti numbers of the tangent cones for Gorenstein Monomial Curves}

\author{P{\i}nar Mete}
\address{Department of Mathematics, 
Bal{\i}kesir University, Bal{\i}kesir, 10145 Turkey}
\email{pinarm@balikesir.edu.tr}

\subjclass{primary 13H10; secondary 13P10, 14H20}
\keywords{Gorenstein monomial curves, tangent cones, Betti numbers}
\date{\today}
%\support{}

\begin{abstract}
The aim of the article is to study the Betti numbers of the tangent cone of Gorenstein
monomial curves in affine 4-space. If $C_S$ is a non-complete intersection Gorenstein 
monomial curve whose tangent cone is Cohen-Macaulay, we  show that the possible 
Betti sequences are  (1,5,5,1), (1,5,6,2) and (1,6,8,3). 
\end{abstract}

\maketitle

\section{Introduction}
Let $S$ denote the numerical semigroup generated by the positive integers $n_1 <n_2 <\ldots < n_d$ with $gcd(n_1,\ldots,n_d)=1$.  
Consider the polynomial rings $R=k[x_1,\ldots,x_d]$ and $k[t]$ over the field $k$.  The semigroup ring $k[S]=k[t^{n_1},\ldots,t^{n_d}]$ 
is the $k-$subalgebra of $k[t]$.  $\varphi: R \to k[S] \subset k[t]$ with $\varphi (x_i)=t^{n_i}$ is the $k-$algebra homomorphism for 
$i=1,\ldots,d$ and its kernel $I_S$ is called the toric ideal of $S$.

Let $m=(t^{n_1},\ldots,t^{n_d})$ be the maximal ideal of the one-dimensional local ring $k[[t^{n_1},\ldots,t^{n_d}]]$. When $k$ is 
algebraically closed field, the semigroup ring $k[S]=k[t^{n_1},\ldots,t^{n_d}]$ is isomorphic to the coordinate ring $R/I_S$ of $C_S$
and the coordinate ring $gr_m(k[[t^{n_1},\ldots,t^{n_d}]])$ of  the tangent cone of $C_S$ at the origin  is isomorphic to the ring 
$R/I_{S_{*}}$. Here,  $I_{S_{*}}$ is generated by the polynomials $f_{*}$ which is the homogeneous summand of $f \in I_S$ and 
is called the defining ideal of the tangent cone of $C_S$. A monomial curve $C_S$ is Gorenstein if the associated local ring 
$k[[t^{n_1},\ldots,t^{n_d}]]$ is Gorenstein. $k[[t^{n_1},\ldots,t^{n_d}]]$ is Gorenstein if and only if the semigroup $S$ is 
symmetric \cite{kunz}. We recall that the numerical semigroup $S$ is symmetric if and only if for all $x \in \mathbb{Z}$ either 
$x \in S$ or $F(S)-x \in S$ , where $F(S)$ denote the Frobenius number of $S$.

Finding an explicit minimal free resolution of a standard $k-$algebra is one of the core areas in commutative algebra. Since it is 
very difficult to obtain a description of the differential in the resolution, we can get some information about the numerical invariants 
of the resolution such as Betti numbers. The i-th Betti number of an $R$-module $M$, $\beta_{i}(M)$, is the rank of the free modules 
appearing in the minimal free resolution of $M$ where $$0 \to R^{\beta_{k-1}} \to \ldots \to R^{\beta_{1}} \to R^{\beta_{0}} $$ and 
the Betti sequence of $M$, $\beta(M)$,  is $(\beta_{0}(M),\beta_{1}(M),\ldots,\beta_{k-1}(M))$.  Stamate \cite{stamate} gave a broad 
survey on the Betti numbers of the numerical semigroup rings and stated the problem of describing the Betti numbers and the 
minimal free resolution for the tangent cone when $S$ is 4-generated semigroup which is symmetric, or equivalently, $C_S$ is a 
Gorenstein monomial curve in affine 4-space [see  \cite{stamate}, Problem 9.9.]. In this paper, we solve this problem for 
Cohen-Macaulay tangent cone of a monomial curve in $\mathbb{A}^{4}(k)$ corresponding to a symmetric numerical semigroup. 
All computations have been done using {\footnotesize SINGULAR} \cite{greuel-pfister},\cite{singular}.

\section{The non-complete intersection Gorenstein Monomial Curves}
For the rest of the paper, we assume that $C_S$ is a Gorenstein non-complete intersection monomial curve in $\mathbb{A}^{4}$. 
Now, we recall Bresinsky's theorem, which gives the explicit description of the defining ideal of $C_S$.
\begin{theorem}\label{thm2.1}\cite{bresinsky} 
Let $C_S$ be a monomial curve having the parametrization
\[
x_1=t^{n_1}, \; x_2=t^{n_2}, \; x_3=t^{n_3}, \; x_4=t^{n_4}
\]
where $S=<n_{1},n_2,n_3,n_{4}>$ is a numerical semigroup minimally generated by $n_1,n_2,n_3,n_4$. The semigroup 
$<n_{1},n_{2},n_{3},n_{4}>$ is symmetric and $C_S$ is a non-complete intersection curve if and only if
$I_S$ is generated by the set
\[
\begin{split}
\{ &
f_{1}=x_{1}^{\alpha_{1}}-x_{3}^{\alpha_{13}}x_{4}^{\alpha_{14}},
f_{2}=x_{2}^{\alpha_{2}}-x_{1}^{\alpha_{21}}x_{4}^{\alpha_{24}},
f_{3}=x_{3}^{\alpha_{3}}-x_{1}^{\alpha_{31}}x_{2}^{\alpha_{32}}, \\
& f_{4}=x_{4}^{\alpha_{4}}-x_{2}^{\alpha_{42}}x_{3}^{\alpha_{43}},
f_{5}=x_{3}^{\alpha_{43}}x_{1}^{\alpha_{21}}-x_{2}^{\alpha_{32}}x_{4}^{\alpha_{14}}\}
\end{split}
\]
where the polynomials $f_{i}$'s are unique up to
isomorphism with $0 < \alpha_{ij} < \alpha_{j}$ with $\alpha_{i}n_i \in <n_1,\ldots,\hat{n}_i,\ldots,n_4>$ such that
$\alpha_{i}$'s are minimal for $1\leq i \leq 4$, where $\hat{n}_i$ denotes that
$n_i \notin <n_1,\ldots,\hat{n}_i,\ldots,n_4>$.
\end{theorem}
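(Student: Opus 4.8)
The plan is to prove the stated equivalence in both directions, with the forward implication (symmetric and non-complete-intersection $\Rightarrow$ the displayed form of $I_S$) being the substantial one and the reverse reducing to a structure-theorem computation.

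For the \emph{``if''} direction, assume $I_S$ is generated by $f_1,\dots,f_5$ with the indicated exponent constraints. Since $V(I_S)=C_S$ is an affine curve and $R/I_S=k[S]$ is a one-dimensional domain, $R/I_S$ is Cohen--Macaulay with $\operatorname{ht}I_S=3$. I would then show that $(f_1,\dots,f_5)$ resolves minimally as $0\to R\to R^5\to R^5\to R\to R/I_S\to 0$ by computing the first syzygy module: the overlap relations among the five binomials (the several ways a monomial of a given $S$-degree is produced) reduce to five minimal syzygies that can be arranged as the columns of a $5\times 5$ alternating matrix $M$ whose maximal Pfaffians recover $f_1,\dots,f_5$ up to sign. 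By the Buchsbaum--Eisenbud structure theorem for grade-three Gorenstein ideals this makes $R/I_S$ Gorenstein; hence $k[S]$ is Gorenstein, so $S$ is symmetric by the Kunz criterion recalled above, and since $\mu(I_S)=5>3$ the curve $C_S$ is not a complete intersection. In passing this also shows the Betti sequence is $(1,5,5,1)$.

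For the \emph{``only if''} direction, suppose $S$ is symmetric, minimally generated by $n_1,n_2,n_3,n_4$, and $C_S$ is not a complete intersection. Numerical semigroup rings are one-dimensional Cohen--Macaulay domains, so $R/I_S$ is Cohen--Macaulay with $\operatorname{ht}I_S=3$, and by Kunz it is Gorenstein. The Buchsbaum--Eisenbud theorem then says $I_S$ is minimally generated by an odd number $2m+1$ of elements, the maximal Pfaffians of a $(2m+1)\times(2m+1)$ alternating matrix, with the complete intersection case being exactly $m=1$; hence $\mu(I_S)\ge 5$. The core of the proof is to show $\mu(I_S)=5$ together with the precise binomial shape. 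Since $I_S$ is a prime binomial (toric) ideal, for each $i$ the minimality of $\alpha_i$ with $\alpha_i n_i\in <n_1,\ldots,\hat{n}_i,\ldots,n_4>$ yields a ``critical'' binomial with leading term $x_i^{\alpha_i}$; reducing it modulo the others one normalizes the trailing monomial so that it involves only two of the remaining variables, producing candidates $f_1,f_2,f_3,f_4$ with $0<\alpha_{ij}<\alpha_j$. Any minimal generating set must contain one further binomial, and by tracking $S$-degrees --- in particular the Frobenius number $F(S)$ and the shape of the Ap\'ery set $\operatorname{Ap}(S,n_1)$ forced by symmetry --- one shows this extra relation may be taken to be $f_5=x_1^{\alpha_{21}}x_3^{\alpha_{43}}-x_2^{\alpha_{32}}x_4^{\alpha_{14}}$, and that no sixth or seventh generator is needed. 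Uniqueness up to isomorphism follows because every exponent is read off from $S$ itself.

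The step I expect to be the main obstacle is this last combinatorial analysis: controlling the Ap\'ery set of an \emph{arbitrary} symmetric $4$-generated semigroup tightly enough to force exactly the five relations above and to exclude minimal generating sets of size $7$ or more. Concretely, one must show that the four ``cyclic'' relations $f_1,\dots,f_4$ --- each recording an $\alpha_i$-fold multiple of $n_i$ --- close up automatically under symmetry, and that their pairwise overlaps together with the single ``mixed'' relation $f_5$ assemble into one $5\times 5$ alternating matrix, i.e.\ that $k[S]$ has Cohen--Macaulay type $1$ realized by precisely a five-generated defining ideal. Once this rigidity is in hand, the inequalities $0<\alpha_{ij}<\alpha_j$ and the minimality of the $\alpha_i$ are routine bookkeeping, and the two directions combine to give Bresinsky's description.
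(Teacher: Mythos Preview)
The paper does not prove Theorem~\ref{thm2.1}; it merely quotes Bresinsky's 1975 result as background, so there is no in-paper proof to compare your attempt against.

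As to your outline itself: the ``if'' direction via the Buchsbaum--Eisenbud structure theorem and Kunz's criterion is sound, and is indeed the standard modern way to see that a five-generated Pfaffian ideal of height three is Gorenstein and not a complete intersection. For the ``only if'' direction you correctly identify the real work --- pinning down that $\mu(I_S)=5$ and that the five binomials have exactly the displayed cyclic shape --- but you do not carry it out; your paragraph about Ap\'ery sets and the Frobenius number is a plausible sketch, not an argument. Bresinsky's original proof is a direct arithmetic analysis of the semigroup (no Pfaffian machinery), deriving the exponent relations $\alpha_1=\alpha_{21}+\alpha_{31}$, $\alpha_2=\alpha_{32}+\alpha_{42}$, etc., from symmetry of $S$ and minimality of the $\alpha_i$. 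Your route through the structure theorem could shorten the ``if'' half, but for the ``only if'' half it only tells you $\mu(I_S)$ is odd and $\ge 5$; excluding $7,9,\ldots$ and forcing the specific binomial pattern still requires essentially Bresinsky's semigroup computation, which remains a genuine gap in your proposal.
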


Theorem \ref{thm2.1} implies that for any non-complete intersection Gorenstein monomial curve $C_S$, the variables can be renamed to 
obtain generators exactly of the given form, and this means that there are six isomorphic possible
permutations which can be considered within three cases:

\begin{enumerate}
  \item $f_1=(1,(3,4))$
  \begin{enumerate}
     \item $f_2=(2,(1,4)), f_3=(3,(1,2)), f_4=(4,(2,3)), f_5=((1,3),(2,4))$
     \item $f_2=(2,(1,3)), f_3=(3,(2,4)), f_4=(4,(1,2)), f_5=((1,4),(2,3))$
   \end{enumerate}
   \item $f_1=(1,(2,3))$
   \begin{enumerate}
     \item $f_2=(2,(3,4)), f_3=(3,(1,4)), f_4=(4,(1,2)), f_5=((2,4),(1,3))$
     \item $f_2=(2,(1,4)), f_3=(3,(2,4)), f_4=(4,(1,3)), f_5=((1,3),(4,2))$
   \end{enumerate}
   \item $f_1=(1,(2,4))$
   \begin{enumerate}
     \item $f_2=(2,(1,3)), f_3=(3,(1,4)), f_4=(4,(2,3)), f_5=((1,2),(3,4))$
     \item $f_2=(2,(3,4)), f_3=(3,(1,2)), f_4=(4,(1,3)), f_5=((2,3),(1,4))$
    \end{enumerate}
\end{enumerate}

\vskip2mm\noindent Here, the notations $f_i\!\!=\!\!(i,(j,k))$ and $f_5\!\!=\!\!((i,j),(k,l))$ denote the generators 
$f_i\!\!=\!\!x_i^{\alpha_i}-x_j^{\alpha_{ij}}x_k^{\alpha_{ik}}$ and
$f_5\!\!=\!\!x_i^{\alpha_{ki}} x_j^{\alpha_{lj}}-x_k^{\alpha_{jk}}x_l^{\alpha_{il}}$.
Thus, given a Gorenstein monomial curve $C_S$, if we have the extra condition $n_1 < n_2 < n_3 < n_4$, then 
the generator set of its defining ideal $I_S$ is exactly given by one of these six permutations.

The study of the Cohen-Macaulayness of tangent cones of monomial curves constitutes an important 
problem \cite{arslan},\cite{arslan-katsabekis-nalbandiyan}. In \cite{arslan-mete}, Arslan and Mete determined the 
common arithmetic conditions satisfied by the generators of the defining ideals of  $C_S$ and under these 
conditions they found the generators of the tangent cone of $C_S$. In \cite{arslan-katsabekis-nalbandiyan}, they  
provided necessary and sufficient conditions for the Cohen-Macaulayness of the tangent cone of  $C_S$ in all six 
permutations and gave the following theorem:

\begin{theorem}\label{thm2.2} \cite{arslan-katsabekis-nalbandiyan} (1)
Suppose that $I_S$ is given as in the Case 1(a). Then $R/I_{S_{*}}$ is Cohen-Macaulay  if and only if 
$\alpha_2\leq \alpha_{21}+\alpha_{24}$.\\[2mm] (2) Suppose that $I_S$ is given as in Case 1(b). 
(i) Assume that $\alpha_{32} < \alpha_{42}$ and $\alpha_{14} \leq \alpha_{34}$. Then $R/I_{S_{*}}$ 
is Cohen-Macaulay  if and only if \\
1. $\alpha_2 \leq \alpha_{21}+\alpha_{23}$\\
2. $\alpha_{42} + \alpha_{13} \leq \alpha_{21}+\alpha_{34}$ and\\
3. $\alpha_{3} + \alpha_{13} \leq \alpha_{1}+\alpha_{32}+\alpha_{34}-\alpha_{14}$.\\[1mm]
(ii) Assume that $\alpha_{42} \leq \alpha_{32}$. Then $R/I_{S_{*}}$ is Cohen-Macaulay  if and only if \\
1. $\alpha_2 \leq \alpha_{21}+\alpha_{23}$\\
2. $\alpha_{42} + \alpha_{13} \leq \alpha_{21}+\alpha_{34}$ and\\
3. either $\alpha_{34} < \alpha_{14}$ and $\alpha_{3} + \alpha_{13} \leq \alpha_{21}+\alpha_{32}-\alpha_{42}+2\alpha_{34}$ \\ 
\indent\hspace{-2mm} or $\alpha_{14} \leq \alpha_{34}$ and  $\alpha_{3} + \alpha_{13} \leq \alpha_{1}+\alpha_{32}+\alpha_{34}-\alpha_{14}$.\\[2mm]
(3) Suppose that $I_S$ is given as in Case 2(a). (i) Assume that $\alpha_{24} < \alpha_{34}$ and $\alpha_{13} \leq \alpha_{23}$. Then $R/I_{S_{*}}$ 
is Cohen-Macaulay  if and only if \\
1. $\alpha_3 \leq \alpha_{31}+\alpha_{34}$\\
2. $\alpha_{12} + \alpha_{34} \leq \alpha_{41}+\alpha_{23}$ and\\
3. $\alpha_{2} + \alpha_{12} \leq \alpha_{1}+\alpha_{23}-\alpha_{13}+\alpha_{24}$.\\[1mm]
(ii) Assume that $\alpha_{34} \leq \alpha_{24}$. Then $R/I_{S_{*}}$ is Cohen-Macaulay  if and only if \\
1. $\alpha_3 \leq \alpha_{31}+\alpha_{34}$\\
2. $\alpha_{12} + \alpha_{34} \leq \alpha_{41}+\alpha_{23}$ and\\
3. either $\alpha_{23} < \alpha_{13}$ and $\alpha_{2} + \alpha_{12} \leq \alpha_{41}+2\alpha_{23}+\alpha_{24}-\alpha_{34}$ \\ 
\indent\hspace{-2mm} or $\alpha_{13} \leq \alpha_{23}$ and  $\alpha_{2} + \alpha_{12} \leq \alpha_{1}+\alpha_{23}-\alpha_{13}+\alpha_{24}$.\\[2mm]
(4) Suppose that $I_S$ is given as in Case 2(b). (i) Assume that $\alpha_{34} < \alpha_{24}$ and $\alpha_{12} \leq \alpha_{32}$. Then $R/I_{S_{*}}$ 
is Cohen-Macaulay  if and only if \\
1. $\alpha_2 \leq \alpha_{21}+\alpha_{24}$ and \\ 
2. $\alpha_{3} + \alpha_{13} \leq \alpha_{1}+\alpha_{32}-\alpha_{12}+\alpha_{34}$.\\[1mm]
(ii) Assume that $\alpha_{24} \leq \alpha_{34}$. Then $R/I_{S_{*}}$ is Cohen-Macaulay if and only if \\
1. $\alpha_2 \leq \alpha_{21}+\alpha_{24}$ and\\
2. either $\alpha_{32} < \alpha_{12}$ and $\alpha_{3} + \alpha_{13} \leq \alpha_{41}+2\alpha_{32}+\alpha_{34}-\alpha_{24}$ \\ 
\indent\hspace{-2mm} or $\alpha_{12} \leq \alpha_{32}$ and  $\alpha_{3} + \alpha_{13} \leq \alpha_{1}+\alpha_{32}-\alpha_{12}+\alpha_{34}$.\\[2mm]
(5) Suppose that $I_S$ is given as in the Case 3(a). Then $R/I_{S_{*}}$ is Cohen-Macaulay tangent cone  if and only if 
$\alpha_2\leq \alpha_{21}+\alpha_{23}$ and $\alpha_3\leq \alpha_{31}+\alpha_{34}$.\\[2mm] 
(6) Suppose that $I_S$ is given as in Case 3(b). (i) Assume that $\alpha_{23} < \alpha_{43}$ and 
$\alpha_{14} \leq \alpha_{24}$. Then $R/I_{S_{*}}$ is Cohen-Macaulay  if and only if \\
1. $\alpha_{12} + \alpha_{43} \leq \alpha_{31}+\alpha_{24}$ and\\
2. $\alpha_{2} + \alpha_{12} \leq \alpha_{1}+\alpha_{23}+\alpha_{24}-\alpha_{14}$.\\[1mm]
(ii) Assume that $\alpha_{43} \leq \alpha_{23}$. Then $R/I_{S_{*}}$ is Cohen-Macaulay  if and only if \\
1. $\alpha_{12} + \alpha_{43} \leq \alpha_{31}+\alpha_{24}$ and\\
2. either $\alpha_{24} < \alpha_{14}$ and $\alpha_{2} + \alpha_{12} \leq \alpha_{31}+2\alpha_{24}+\alpha_{23}-\alpha_{43}$ \\
\indent\hspace{-2mm} or 
$\alpha_{14} \leq \alpha_{24}$ and  $\alpha_{2} + \alpha_{12} \leq \alpha_{1}+\alpha_{23}+\alpha_{24}-\alpha_{14}$.\\
 \end{theorem}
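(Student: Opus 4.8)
\smallskip\noindent
The plan is to reduce the Cohen--Macaulayness of $R/I_{S_*}$ to a divisibility condition on a standard basis of $I_S$, treated separately in each of the six permutations. The starting observation is the standard fact that, since $\dim R/I_{S_*}=1$ and the image $x_1^{*}$ of $x_1$ in $gr_m(k[[t^{n_1},\ldots,t^{n_4}]])\cong R/I_{S_*}$ is a degree-one element belonging to a homogeneous system of parameters, $R/I_{S_*}$ is Cohen--Macaulay if and only if $x_1^{*}$ is a nonzerodivisor, that is, $(I_{S_*}:x_1)=I_{S_*}$. So the whole problem splits into (a) writing down an explicit generating set of $I_{S_*}$, and (b) testing the colon ideal $(I_{S_*}:x_1)$.

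For step (a) one cannot simply take the lowest-degree forms $f_{i*}$ of the Bresinsky generators from Theorem~\ref{thm2.1}, since these need not generate $I_{S_*}$. Instead I would compute a standard basis of $I_S$ with respect to a negative-degree local term order in which ties are broken so that $x_1$ is used last, after which $I_{S_*}$ is generated by the lowest-degree forms of the elements of that standard basis. This is precisely where the case distinctions of the statement enter: the lowest-degree form of $f_i=x_i^{\alpha_i}-x_j^{\alpha_{ij}}x_k^{\alpha_{ik}}$ is $x_i^{\alpha_i}$ or $x_j^{\alpha_{ij}}x_k^{\alpha_{ik}}$ according as $\alpha_i<\alpha_{ij}+\alpha_{ik}$ or $\alpha_i\ge\alpha_{ij}+\alpha_{ik}$, and likewise for $f_5$; hypotheses such as ``$\alpha_{32}<\alpha_{42}$ and $\alpha_{14}\le\alpha_{34}$'' (Case~1(b)(i)) or ``$\alpha_{42}\le\alpha_{32}$'' (Case~1(b)(ii)) select one branch of the S-polynomial reductions. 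In each branch the reductions among $f_1,\ldots,f_5$ terminate after producing a small number of additional standard basis elements, and one reads off from them an explicit generating set of $I_{S_*}$ — typically binomial together with a few pure-power or mixed monomials.

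For step (b), with such a generating set of $I_{S_*}$ in hand, $x_1^{*}$ fails to be a nonzerodivisor exactly when there is a monomial $h$ in $x_2,x_3,x_4$ (possibly times a bounded power of $x_1$) with $x_1h\in I_{S_*}$ but $h\notin I_{S_*}$; matching $x_1h$ against the generators turns this into a comparison of exponents whose negation is exactly an inequality in the statement. For instance in Case~1(a) the condition $\alpha_2\le\alpha_{21}+\alpha_{24}$ is what forces $f_{2*}$ to carry the pure power $x_2^{\alpha_2}$ and thereby prevents a monomial coming from $f_2$ from witnessing a zerodivisor, while in the branched cases the two or three listed inequalities correspond one-to-one to the two or three extra standard basis generators produced in that branch. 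The ``only if'' direction is then the explicit zerodivisor $h$ exhibited when an inequality is violated, and the ``if'' direction is the verification that under all the inequalities every homogeneous element killed by $x_1$ already lies in $I_{S_*}$.

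The main obstacle is the organization and the completeness of the standard-basis computation: the reductions branch according to the relative sizes of sums of the $\alpha_{ij}$, so one must lay out the six permutations in exactly the subcases (i)/(ii) of the statement, carry out the S-polynomial reductions in each, and — the delicate point — confirm that the resulting set is genuinely a standard basis (all S-polynomials reduce to $0$ over the local ring). Once the standard basis is pinned down in a given branch, extracting the displayed inequalities from $(I_{S_*}:x_1)=I_{S_*}$ is routine exponent arithmetic, and the SINGULAR computations mentioned in the introduction serve to suggest and to double-check the hand analysis.
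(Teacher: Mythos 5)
First, a point of order: Theorem~\ref{thm2.2} is not proved in this paper at all --- it is quoted verbatim from \cite{arslan-katsabekis-nalbandiyan} --- so there is no in-paper argument to measure your proposal against. That said, the strategy you outline is the standard one and is essentially the route taken in the cited source: since $n_1$ is the multiplicity, $x_1^{*}$ is a degree-one parameter for the one-dimensional ring $R/I_{S_*}$, so Cohen--Macaulayness is equivalent to $x_1^{*}$ being a nonzerodivisor, i.e.\ to $(I_{S_*}:x_1)=I_{S_*}$; and $I_{S_*}$ is obtained as the ideal of lowest-degree forms of a standard basis of $I_S$ with respect to a negative degree ordering. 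Both reductions are correct and are the right tools for this theorem.

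The gap is that your proposal stops exactly where the content of the theorem begins. The statement is nothing but a list of some twenty explicit inequalities distributed over six permutations and their subcases, and each of them has to be extracted from a concrete S-polynomial computation; you derive none of them. Concretely: (a) you never carry out the reductions or verify that the resulting sets are actually standard bases (all S-polynomials reducing to zero over the local ring) --- this is precisely where hypotheses such as $\alpha_{32}<\alpha_{42}$ and $\alpha_{14}\le\alpha_{34}$ enter and where the extra generators appear (compare the sets $G$ with the element $f_6$ listed in the proofs of Theorems~\ref{thm3.2}--\ref{thm3.5}, imported from \cite{katsabekis}); and (b) you never exhibit, for even one violated inequality, the witnessing element $h$ with $x_1h\in I_{S_*}$ and $h\notin I_{S_*}$, nor check the converse implication under the full set of inequalities. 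Describing this as ``routine exponent arithmetic'' does not prove a statement whose entire assertion is the outcome of that arithmetic. As a plan your write-up is sound and faithful to the method of \cite{arslan-katsabekis-nalbandiyan}; as a proof it establishes none of the displayed conditions.
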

\vspace{-3mm}Recently, Katsabekis gave the remaining standard bases for $I_S$ in \cite{katsabekis} using above conditions for the 
Cohen-Macaulayness of the tangent cone of $C_S$.  

\section{Betti Sequences of Cohen-Macaulay Tangent Cones }

In this section, we determine the Betti sequences of the tangent cone of $C_S$ whose tangent cone is Cohen-Macaulay. 
Buchsbaum-Eisenbud criterion \cite{buchsbaum-eisenbud} and the following Lemma (see also \cite{herzog-stamate})
will be used in the all proofs.  

\begin{lemma} \label{lemma3.1}\cite{sahin-sahin}
Assume that the multiplicity of the monomial curve is $n_i$. Suppose that the k-algebra homomorphism 
$\pi_{i} : R \to \overline{R}=k[x_1,\ldots,\overline{x_i},\ldots,x_d]$ is defined by $\pi_{i} (x_i)=\overline{x_i}=0$ and 
$\pi_{j}(x_j)=x_j$ for $i \neq j$, and set $\overline{I}=\pi_{i}(I_{S_{*}})$. If the tangent cone 
$gr_m(k[[t^{n_1},\ldots,t^{n_d}]])$ is Cohen-Macaulay, then the Betti sequences of $gr_m(k[[t^{n_1},\ldots,t^{n_d}]])$ 
and of $\overline{R}/\overline{I}$ are the same.
\end{lemma}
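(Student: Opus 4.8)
The plan is to exhibit a linear non-zerodivisor on the tangent cone $G:=gr_m(k[[t^{n_1},\dots,t^{n_d}]])=R/I_{S_{*}}$ and then apply the standard principle that cutting a finitely generated graded module by a regular linear form preserves all of its Betti numbers while replacing the ambient polynomial ring by one having one fewer variable. The natural candidate is $x_i^{*}$, the image of $x_i$ in $G$. I would first note that $x_i^{*}$ is a \emph{nonzero} element of degree $1$: since $n_1,\dots,n_d$ minimally generate $S$, the embedding dimension of $A:=k[[t^{n_1},\dots,t^{n_d}]]$ is $d$, so the degree-one part of $G$, which is $m/m^{2}$, has dimension $d$; being a quotient of the $d$-dimensional space of linear forms of $R$, this forces $I_{S_{*}}$ to contain no linear form, and in particular $x_i^{*}\neq 0$.

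The heart of the matter is that $x_i^{*}$ is a non-zerodivisor on $G$. Because $n_i$ is the multiplicity of $A$, the principal ideal $(t^{n_i})$ is a minimal reduction of $m$ (a standard fact for numerical semigroup rings: $m^{r+1}=t^{n_i}m^{r}$ for $r\gg0$), and $t^{n_i}\notin m^{2}$ since the smallest generator of $S$ cannot be a sum of two generators. Passing to $G$, the initial form of $t^{n_i}$ is $x_i^{*}$, and comparing bottom-degree components in $m^{r+1}=t^{n_i}m^{r}$ yields $(m_{*})^{r+1}=x_i^{*}(m_{*})^{r}$, where $m_{*}$ denotes the graded maximal ideal of $G$; since these are ideals generated in the single degree $r+1$, it follows that $(x_i^{*})$ is a reduction of $m_{*}$. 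Hence $G/x_i^{*}G$ is Artinian, i.e.\ $x_i^{*}$ is a one-element system of parameters of the one-dimensional ring $G$; as $G$ is Cohen-Macaulay by hypothesis, a system of parameters is a regular sequence, so $x_i^{*}$ is $G$-regular. (One may instead simply quote the well-known criterion that for a one-dimensional local ring the tangent cone is Cohen-Macaulay exactly when the initial form of a minimal reduction of the maximal ideal is a non-zerodivisor on it.)

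The conclusion is then the routine base-change argument. Let $F_{\bullet}\to G$ be the minimal graded free resolution of $G$ over $R$. Tensoring the Koszul resolution $0\to R(-1)\xrightarrow{\,x_i\,}R\to\overline R\to0$ of $\overline R=R/(x_i)=k[x_1,\dots,\widehat{x_i},\dots,x_d]$ with $G$ and using that $x_i^{*}=x_i$ is $G$-regular gives $\mathrm{Tor}^{R}_{j}(G,\overline R)=0$ for $j\ge1$; hence $F_{\bullet}\otimes_{R}\overline R$ is an $\overline R$-free resolution of $G/x_iG$, and it is still minimal because the entries of its differentials lie in $(x_1,\dots,x_d)$ and so map into the graded maximal ideal of $\overline R$. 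Therefore $\beta_{j}^{\overline R}(G/x_iG)=\mathrm{rank}\,F_{j}=\beta_{j}^{R}(G)$ for all $j$. Finally $G/x_iG=(R/I_{S_{*}})/(x_i)\cong R/(I_{S_{*}}+(x_i))\cong\overline R/\pi_i(I_{S_{*}})=\overline R/\overline I$, which is exactly the assertion that $gr_m(k[[t^{n_1},\dots,t^{n_d}]])$ and $\overline R/\overline I$ have the same Betti sequence.

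The only substantive step is the middle one: that Cohen-Macaulayness of the tangent cone forces the specific variable $x_i$ attached to the multiplicity to be a non-zerodivisor on $G$. Everything preceding it is bookkeeping about the embedding dimension, and everything after it is the standard comparison of graded free resolutions over $R$ and over $R/(x_i)$ together with the immediate identification of $G/x_iG$ with $\overline R/\overline I$.
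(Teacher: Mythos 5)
Your argument is correct, and it is essentially the standard proof of this statement: the paper itself gives no proof (the lemma is quoted from \cite{sahin-sahin}), and the proof there proceeds exactly as you do, by showing that the initial form of $t^{n_i}$ (the multiplicity element, which generates a minimal reduction of $m$) is a linear non-zerodivisor on the Cohen--Macaulay tangent cone and then invoking the invariance of graded Betti numbers under factoring out a linear regular element, together with the identification $R/(I_{S_*}+(x_i))\cong\overline R/\pi_i(I_{S_*})$. No gaps; the one step worth spelling out in full detail is the passage from $m^{r+1}=t^{n_i}m^r$ to surjectivity of multiplication by $x_i^*$ from $G_j$ onto $G_{j+1}$ for $j\ge r$, which you handle correctly.
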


This is a very effective result to reduce the number of cases for finding the Betti numbers of the tangent 
cones.

%Case1(a)%

\vspace{4mm}\noindent\textbf{Case 1(a) :}
Suppose that $I_S$ is minimally generated by the polynomials
$$f_{1}=x_{1}^{\alpha_{1}}-x_{3}^{\alpha_{13}}x_{4}^{\alpha_{14}},
\hspace{7mm}f_{2}=x_{2}^{\alpha_{2}}-x_{1}^{\alpha_{21}}x_{4}^{\alpha_{24}},
\hspace{7mm}f_{3}=x_{3}^{\alpha_{3}}-x_{1}^{\alpha_{31}}x_{2}^{\alpha_{32}},$$ \\[-9mm]
$$f_{4}=x_{4}^{\alpha_{4}}-x_{2}^{\alpha_{42}}x_{3}^{\alpha_{43}}, 
\hspace{7mm}f_{5}=x_{1}^{\alpha_{21}}x_{3}^{\alpha_{43}}-x_{2}^{\alpha_{32}}x_{4}^{\alpha_{14}}$$

\noindent where $\alpha_{1}=\alpha_{21}+\alpha_{31}$,  $\alpha_{2}=\alpha_{32}+\alpha_{42}$,
 $\alpha_{3}=\alpha_{13}+\alpha_{43}$,  $\alpha_{4}=\alpha_{14}+\alpha_{24}$. 
 The condition $n_1<n_2<n_3<n_4$ implies
$\alpha_1>\alpha_{13}+\alpha_{14}, \; \alpha_4<\alpha_{42}+\alpha_{43}$ and $\alpha_3<\alpha_{31}+\alpha_{32}.$\\

$C_S$ has  Cohen-Macaulay tangent cone only under the  condition 
$\alpha_2 \leq \alpha_{21}+\alpha_{24}$ by \cite{arslan-katsabekis-nalbandiyan}.
Mete and Zengin   \cite{mete-zengin} gave the explicit minimal free resolution for the tangent cone of 
$C_S$ and showed that the Betti sequence of its tangent cone is $(1,5,6,2)$.

 %Case1(b)%
 
\vspace{4mm}\noindent\textbf{Case 1(b) :} Suppose that $I_S$ is minimally generated by the polynomials
$$f_{1}=x_{1}^{\alpha_{1}}-x_{3}^{\alpha_{13}}x_{4}^{\alpha_{14}},
\hspace{7mm}f_{2}=x_{2}^{\alpha_{2}}-x_{1}^{\alpha_{21}}x_{3}^{\alpha_{23}},
\hspace{7mm}f_{3}=x_{3}^{\alpha_{3}}-x_{2}^{\alpha_{32}}x_{4}^{\alpha_{34}}, $$ \\[-9mm]
$$f_{4}=x_{4}^{\alpha_{4}}-x_{1}^{\alpha_{41}}x_{2}^{\alpha_{42}},
\hspace{7mm}f_{5}=x_{1}^{\alpha_{21}}x_{4}^{\alpha_{34}}-x_{2}^{\alpha_{42}}x_{3}^{\alpha_{13}}$$

\noindent where $\alpha_{1}=\alpha_{21}+\alpha_{41}$,  $\alpha_{2}=\alpha_{32}+\alpha_{42}$,
 $\alpha_{3}=\alpha_{13}+\alpha_{23}$,  $\alpha_{4}=\alpha_{14}+\alpha_{34}$.
The condition $n_1<n_2<n_3<n_4$ implies
$\alpha_1>\alpha_{13}+\alpha_{14}$ and $ \alpha_4<\alpha_{41}+\alpha_{42}$. Under the restriction 
$\alpha_{2} \leq \alpha_{21}+\alpha_{23}$ by Theorem \ref{thm2.2} and one possible condition 
$\alpha_{3} \leq \alpha_{32}+\alpha_{34}$, the Betti sequences for 
the Cohen-Macaulay tangent cone of $C_S$ are $(1,5,5,1)$ and $(1,5,6,2)$, as given in \cite{mete-zengin}.
Here, the other possible condition $\alpha_{3} >  \alpha_{32}+\alpha_{34}$ will be considered. 

\begin{theorem}\label{thm3.2} The Betti sequence of the tangent cone $R/I_{S_*}$ of $C_S$ is $(1,6,8,3)$, 
if $I_S$ is given as in Case 1(b) when $\alpha_{3} >  \alpha_{32}+\alpha_{34}$.
\end{theorem}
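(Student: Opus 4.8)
The plan is to follow the scheme announced before Lemma~\ref{lemma3.1}: reduce from four variables to three, write down an explicit minimal free resolution, and certify its exactness with the Buchsbaum--Eisenbud acyclicity criterion. Since $n_1<n_2<n_3<n_4$, the multiplicity of $C_S$ equals $n_1$, so Lemma~\ref{lemma3.1} applies with $i=1$, and it suffices to compute the Betti sequence of $\overline{R}/\overline{I}$, where $\overline{R}=k[x_2,x_3,x_4]$ and $\overline{I}=\pi_1(I_{S_*})$. Because the tangent cone is assumed Cohen--Macaulay of dimension one and $x_1$ is then a nonzerodivisor on it, $\overline{R}/\overline{I}$ is Artinian, hence of projective dimension $3$ over $\overline{R}$ by the Auslander--Buchsbaum formula; so the minimal free resolution has the shape
\[
0 \longrightarrow \overline{R}^{\beta_3} \longrightarrow \overline{R}^{\beta_2} \longrightarrow \overline{R}^{\beta_1} \longrightarrow \overline{R} \longrightarrow \overline{R}/\overline{I}\longrightarrow 0,
\]
and the theorem reduces to the assertion $(\beta_1,\beta_2,\beta_3)=(6,8,3)$.

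First I would fix a standard basis of $I_{S_*}$ in Case~1(b) under the hypothesis $\alpha_3>\alpha_{32}+\alpha_{34}$ (together with the Cohen--Macaulayness conditions of Theorem~\ref{thm2.2}(2)); this is supplied by the computations of \cite{arslan-katsabekis-nalbandiyan} and \cite{katsabekis}, the key point being that the inequality $\alpha_3>\alpha_{32}+\alpha_{34}$ forces additional S-polynomials into the standard basis, so that $I_{S_*}$ requires strictly more generators than in the range $\alpha_3\le\alpha_{32}+\alpha_{34}$ treated in \cite{mete-zengin}. Applying $\pi_1$ (that is, setting $x_1=0$) kills every basis element divisible by $x_1$ and truncates the rest, and in this case the hypotheses force each surviving generator to be a monomial, so $\overline{I}$ is a monomial ideal in $x_2,x_3,x_4$: I expect it to be minimally generated by six monomials — the powers $x_2^{\alpha_2}$, $x_4^{\alpha_4}$, a power of $x_3$, the image $x_2^{\alpha_{42}}x_3^{\alpha_{13}}$ of $f_{5_*}$, and two further mixed monomials coming from the new standard-basis elements — which, since no one of them divides another, already yields $\beta_1=6$. (Depending on the sign of auxiliary differences such as $\alpha_{14}-\alpha_{34}$ or $\alpha_{32}-\alpha_{42}$ one may have to split into the sub-cases of Theorem~\ref{thm2.2}(2), but the monomial shape of $\overline{I}$, and hence its graded Betti numbers, should be uniform across them.)

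With the six generators written down explicitly, the next step is to compute the syzygies: the first syzygy module of a monomial ideal is generated by the Koszul-type relations between pairs of generators, and after reducing to a minimal set I expect exactly $8$ of them to survive, giving $\beta_2=8$, and then $3$ independent second syzygies among those, giving $\beta_3=3$; the alternating-rank identity $1-6+8-3=0$ is then automatic. To turn this into a proof I would display the matrices $d_2\in\overline{R}^{6\times 8}$ and $d_3\in\overline{R}^{8\times 3}$, check $d_1d_2=0$ and $d_2d_3=0$ directly, observe that all entries lie in the maximal ideal of $\overline{R}$ (so that once the complex is shown acyclic it is automatically minimal), and finally invoke the Buchsbaum--Eisenbud criterion: with ranks $r_1=1$, $r_2=5$, $r_3=3$ it remains to verify $\operatorname{grade} I_{1}(d_1)\ge 1$, $\operatorname{grade} I_{5}(d_2)\ge 2$ and $\operatorname{grade} I_{3}(d_3)\ge 3$. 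The first two are routine, while the last — showing that the ideal of $3\times 3$ minors of $d_3$ is $\mathfrak{m}$-primary in the three-variable ring — is the step I expect to be the main obstacle; one wants to exhibit among these minors suitable pure powers of $x_2$, $x_3$ and $x_4$, and it is precisely here that the hypothesis $\alpha_3>\alpha_{32}+\alpha_{34}$ (which governs the exponents entering $d_3$) is genuinely used.
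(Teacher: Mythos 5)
Your proposal is correct and follows essentially the same route as the paper: reduce via Lemma~\ref{lemma3.1} to the monomial ideal $\overline{I}=\pi_1(I_{S_*})$ in $k[x_2,x_3,x_4]$ (whose six generators you identify, in agreement with the paper's $G_*$), then certify an explicit complex $0\to R^3\to R^8\to R^6\to R$ by the Buchsbaum--Eisenbud criterion with exactly the ranks $1,5,3$ and grade bounds $1,2,3$ that the paper verifies by exhibiting pure-power minors. The only thing separating your outline from the paper's proof is actually writing down the matrices $\phi_2$ and $\phi_3$ and the specific minors.
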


\begin{proof} Suppose that $I_S$ is given as in the Case 1(b) when $\alpha_{3} >  \alpha_{32}+\alpha_{34}$. 

\noindent (i)  By Proposition 2.7 in \cite{katsabekis}, if $\alpha_{32} < \alpha_{42}$ and $\alpha_{14} \leq \alpha_{34}$, then,
$$G=\{f_{1},f_{2},f_{3}, f_{4},f_{5},f_{6}=x_{3}^{\alpha_{3}+\alpha_{13}}-x_1^{\alpha_{1}}x_{2}^{\alpha_{32}}x_{4}^{\alpha_{34}-\alpha_{14}}\},$$
(ii) By Proposition 2.9 in \cite{katsabekis},

\indent (1) if $\alpha_{42} \leq \alpha_{32}$ and $\alpha_{34} < \alpha_{14}$, then 
$$G=\{f_{1},f_{2},f_{3}f_{4},f_{5},f_{6}=x_{3}^{\alpha_{3}+\alpha_{13}}-x_1^{\alpha_{21}}x_{2}^{\alpha_{32}-\alpha_{42}}x_{4}^{2\alpha_{34}}\},$$

\indent (2) if $\alpha_{42} \leq \alpha_{32}$ and $\alpha_{14} \leq \alpha_{34}$, then 
$$G=\{f_{1},f_{2},f_{3},f_{4},f_{5},f_{6}=x_{3}^{\alpha_{3}+\alpha_{13}}-x_1^{\alpha_{1}}x_{2}^{\alpha_{32}}x_{4}^{\alpha_{34}-\alpha_{14}}\}$$
\noindent are standard bases for $I_S$.
 Since $\overline{I}=\pi_{i}(I_{S_{*}})$ which sends $x_1$ to 0, the generators of the defining ideal of $\overline{I}$ is generated by 
$$G_{*}=(x_3^{\alpha_{13}}x_4^{\alpha_{14}},x_2^{\alpha_2},x_2^{\alpha_{32}}x_4^{\alpha_{34}},x_4^{\alpha_{4}},x_2^{\alpha_{42}}x_3^{\alpha_{13}},x_3^{\alpha_{3}+\alpha_{13}}).$$

\noindent  Now, consider the case (i). Since the Betti sequences of   $R/I_{S_*}$ and $\overline{R}/\overline{I}$ are the same which 
follows from Lemma \ref{lemma3.1}, we will show that the sequence,
\begin{center}
$ 0 \rightarrow R^3 \xrightarrow{\phi_3} R^8 \xrightarrow{\phi_2} R^6\xrightarrow{\phi_1} R^1  \rightarrow 0 $
\end{center}
\noindent is a minimal free resolution of $\overline{R}/\overline{I}$, where

{\footnotesize
$$
\phi_1=
\begin{pmatrix}
x_3^{\alpha_{13}}x_4^{\alpha_{14}} &
x_{2}^{\alpha_{2}}&
x_{2}^{\alpha_{32}}x_{4}^{\alpha_{34}}&
x_{4}^{\alpha_{4}}&
x_{2}^{\alpha_{42}}x_{3}^{\alpha_{13}}&
x_{3}^{\alpha_{3}+\alpha_{13}} 
\end{pmatrix},
$$\\[-7mm]

$$
\phi_2=
\begin{pmatrix}
0  & -x_2^{\alpha_{32}}x_4^{\alpha_{34}-\alpha_{14}} & 0  &  -x_4^{\alpha_{34}} & x_2^{\alpha_{42}} & x_3^{\alpha_{3}}  & 0 &  0 \\[0.5mm]
0 & 0 &  -x_4^{\alpha_{34}} & 0 & 0 & 0 & x_3^{\alpha_{13}} & 0  \\[0.5mm]
x_4^{\alpha_{14}} &  x_3^{\alpha_{13}}  & x_2^{\alpha_{42}} & 0  & 0  & 0 & 0 & 0 \\[0.5mm]
-x_2^{\alpha_{32}} & 0 & 0 & x_3^{\alpha_{13}} & 0 & 0 & 0 & 0 \\[0.5mm]
0 & 0 & 0 & 0 & -x_4^{\alpha_{14}} & 0 &  -x_2^{\alpha_{32}} &  x_3^{\alpha_{3}} \\[0.5mm]
0 & 0 & 0 & 0 & 0 & -x_4^{\alpha_{14}}& 0 & -x_2^{\alpha_{42}} \\
\end{pmatrix},
$$\\[-7mm]

$$
\phi_3=
\begin{pmatrix}
x_{3}^{\alpha_{13}} & 0 & 0\\[0.5mm]
-x_{4}^{\alpha_{14}} &   x_{2}^{\alpha_{42}} & 0 \\[0.5mm]
0 & -x_{3}^{\alpha_{13}} & 0 \\[0.5mm]
x_{2}^{\alpha_{32}} & 0  & 0\\[0.5mm]
0 &  x_{2}^{\alpha_{32}}x_{4}^{\alpha_{34}-\alpha_{14}}& x_{3}^{\alpha_{3}}\\[0.5mm]
0 & 0  & -x_{2}^{\alpha_{42}} \\[0.5mm]
0 & x_{4}^{\alpha_{34}} & 0  \\[0.5mm]
0 & 0 & x_{4}^{\alpha_{14}}
\end{pmatrix}.
$$
}

\noindent Since $\phi_1\phi_2=\phi_2\phi_3=0$, the sequence above is a complex. 
 One can easily check that $rank(\phi_1)=1$,  $rank(\phi_2)=5$ and  $rank(\phi_3)=3$.
As the 5-minors of $\phi_2$, we get  
$x_3^{2\alpha_{3}+3\alpha_{13}}$ by deleting the row 6, and the columns 1, 3, 5,  and 
$-x_4^{2\alpha_{4}+\alpha_{14}}$ by deleting the row 4 and the columns 2, 7, 8. These two determinants are 
relatively prime, so $I(\phi_2)$ contains a regular sequence of length 2. As the 3-minors of $\phi_3$, we have 
$x_2^{\alpha_{2}+\alpha_{42}}$ by deleting the rows 1, 3, 5, 7, 8,  and  $-x_3^{\alpha_{3}+2\alpha_{13}}$ by 
deleting the rows 2, 4, 6, 7, 8, and finally, $-x_4^{\alpha_{4}+\alpha_{14}}$ by deleting the rows 1, 3, 4, 5, 6 
Since these are relatively prime, $I(\phi_3)$ contains a regular sequence of length 3.

The other cases (ii)-(1) and (2) can be done similarly and are, hence, omitted.
\end{proof}

 %Case2(a)%
 
\vspace{5mm}\noindent\textbf{Case 2(a) :} Suppose that $I_S$ is minimally generated by the polynomials
$$f_{1}=x_{1}^{\alpha_{1}}-x_{2}^{\alpha_{12}}x_{3}^{\alpha_{13}},
\hspace{7mm}f_{2}=x_{2}^{\alpha_{2}}-x_{3}^{\alpha_{23}}x_{4}^{\alpha_{24}},
\hspace{7mm}f_{3}=x_{3}^{\alpha_{3}}-x_{1}^{\alpha_{31}}x_{4}^{\alpha_{34}}, $$ \\[-9mm]
$$f_{4}=x_{4}^{\alpha_{4}}-x_{1}^{\alpha_{41}}x_{2}^{\alpha_{42}},  
\hspace{7mm}f_{5}=x_{1}^{\alpha_{41}}x_{3}^{\alpha_{23}}-x_{2}^{\alpha_{12}}x_{4}^{\alpha_{34}}$$

\noindent where $\alpha_{1}=\alpha_{31}+\alpha_{41}$,  $\alpha_{2}=\alpha_{12}+\alpha_{42}$,
 $\alpha_{3}=\alpha_{13}+\alpha_{23}$,  $\alpha_{4}=\alpha_{24}+\alpha_{34}.$
The condition $n_1<n_2<n_3<n_4$ implies
$\alpha_1>\alpha_{12}+\alpha_{13}$, \;$\alpha_2>\alpha_{23}+\alpha_{24}$ and 
$ \alpha_4<\alpha_{41}+\alpha_{42}$.  Note  that  the assumption $ \alpha_3 \leq \alpha_{31}+\alpha_{34}$ 
is one of the necessary and sufficient conditions for the Cohen-Macaulayness of the tangent cone 
by Theorem \ref{thm2.2}.

\begin{theorem}\label{thm3.3}  The Betti sequence of the tangent cone $R/I_{S_*}$ of $C_S$ is $(1,6,8,3)$, 
if $I_S$ is given as in Case 2(a).
\end{theorem}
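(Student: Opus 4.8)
The plan is to mirror the proof of Theorem~\ref{thm3.2}. Since $n_1<n_2<n_3<n_4$, the multiplicity of $C_S$ is $n_1$, so Lemma~\ref{lemma3.1} applies with $i=1$: setting $\overline{R}=k[x_2,x_3,x_4]$ and $\overline{I}=\pi_1(I_{S_*})$, it suffices to compute the Betti sequence of $\overline{R}/\overline{I}$. First I would combine the Cohen--Macaulayness conditions in Theorem~\ref{thm2.2}(3) with the standard bases of Katsabekis \cite{katsabekis} to get, in each relevant sub-case of Case 2(a) (those indexed by $\alpha_{24}\lessgtr\alpha_{34}$ and $\alpha_{13}\lessgtr\alpha_{23}$), a standard basis $G=\{f_1,\dots,f_5,f_6\}$ of $I_S$, where $f_6$ is an extra binomial whose exponents change with the sub-case. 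Note that, unlike Case 1(b), here the inequality $\alpha_2>\alpha_{23}+\alpha_{24}$ forced by $n_1<n_2<n_3<n_4$ puts us always in the analogue of the sub-case $\alpha_3>\alpha_{32}+\alpha_{34}$; this is why no extra hypothesis is needed and why the same sequence arises in every sub-case. Applying $\pi_1$ to the leading forms of $G$ produces six monomial generators of $\overline{I}$,
$$G_*=(x_2^{\alpha_{12}}x_3^{\alpha_{13}},\ x_3^{\alpha_{23}}x_4^{\alpha_{24}},\ x_3^{\alpha_3},\ x_4^{\alpha_4},\ x_2^{\alpha_{12}}x_4^{\alpha_{34}},\ g),$$
with $g$ the $\pi_1$-image of the leading form of $f_6$.

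Next, for each sub-case I would write down an explicit candidate resolution
$$0\longrightarrow R^3\xrightarrow{\ \phi_3\ }R^8\xrightarrow{\ \phi_2\ }R^6\xrightarrow{\ \phi_1\ }R\longrightarrow 0$$
of $\overline{R}/\overline{I}$, where $\phi_1$ lists the six generators of $G_*$ and $\phi_2,\phi_3$ are monomial matrices obtained by computing the syzygies of these monomials by hand and trimming to a minimal generating set. One checks $\phi_1\phi_2=0$ and $\phi_2\phi_3=0$, so the sequence is a complex. Exactness is then obtained from the Buchsbaum--Eisenbud acyclicity criterion \cite{buchsbaum-eisenbud}: I would verify $\mathrm{rank}\,\phi_1=1$, $\mathrm{rank}\,\phi_2=5$, $\mathrm{rank}\,\phi_3=3$ (consistent with $1+5=6$, $5+3=8$ and $3=3$), and exhibit among the maximal minors of each $\phi_i$ a regular sequence of length $i$. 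Concretely, a single generator such as $x_3^{\alpha_3}$ lies in $I(\phi_1)$, so $\mathrm{depth}\,I(\phi_1)\ge1$; two of the $5\times5$ minors of $\phi_2$ are a pure power of $x_3$ and a pure power of $x_4$, a regular sequence of length $2$; and three of the $3\times3$ minors of $\phi_3$ are pure powers of $x_2$, $x_3$ and $x_4$, a regular sequence of length $3$. Since $\dim\overline{R}=3$, these depth bounds are precisely what the criterion requires, so the complex is acyclic; minimality is automatic because every entry of $\phi_1,\phi_2,\phi_3$ lies in $(x_2,x_3,x_4)$. Hence $\beta(\overline{R}/\overline{I})=(1,6,8,3)$, and Lemma~\ref{lemma3.1} gives $\beta(R/I_{S_*})=(1,6,8,3)$.

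The homological bookkeeping above is mechanical once the matrices are in hand; the real effort is the case analysis. Case 2(a) splits into three sub-cases (mirroring parts (i), (ii)-(1), (ii)-(2) of the proof of Theorem~\ref{thm3.2}) according to how $\alpha_{24}$ compares with $\alpha_{34}$ and $\alpha_{13}$ with $\alpha_{23}$; the extra generator $f_6$, hence the columns of $\phi_2$ and $\phi_3$ that involve it, changes from one sub-case to the next, and one must confirm that each variant still assembles into a complex satisfying the rank and depth conditions. I would therefore carry out the argument in full for one representative sub-case, record that the others follow the same pattern after the obvious substitutions in the entries of $\phi_2,\phi_3$ (in particular the pure-power minors survive in every variant), and use the {\footnotesize SINGULAR} computations mentioned in the introduction as a check that no sub-case yields a different Betti sequence. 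The step I expect to be the main obstacle is producing the correct eight minimal second syzygies and three minimal third syzygies of $G_*$ in each sub-case, and verifying $\phi_2\phi_3=0$ there; everything downstream is routine.
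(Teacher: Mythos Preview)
Your proposal is correct and follows essentially the same route as the paper: reduce via Lemma~\ref{lemma3.1} to the monomial ideal $\overline{I}\subset\overline{R}=k[x_2,x_3,x_4]$ obtained from Katsabekis's standard bases, write down an explicit $0\to R^3\to R^8\to R^6\to R\to 0$, and verify exactness by the Buchsbaum--Eisenbud criterion via pure-power minors; the paper does exactly this for sub-case (i) and omits the others as similar. One small simplification you may notice when you carry it out: although $f_6$ itself varies across the three sub-cases, after applying $\pi_1$ its contribution to $\overline{I}$ is always $x_2^{\alpha_2+\alpha_{12}}$ (the $x_1$-term disappears), so $G_*$ and hence $\overline{I}$ are literally the same monomial ideal in all sub-cases; the only reason the displayed matrices differ is that certain entries such as $x_3^{\alpha_{23}-\alpha_{13}}$ require $\alpha_{13}\le\alpha_{23}$ to be well defined.
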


\begin{proof} Suppose that $I_S$ is given as in the Case 2(a). 

\noindent (i) By Proposition 2.11 in \cite{katsabekis}, if $\alpha_{24} < \alpha_{34}$ and 
$\alpha_{13} \leq \alpha_{23}$, then
$$G=\{f_{1},f_{2},f_{3},f_{4},f_{5},f_{6}=x_{2}^{\alpha_{2}+\alpha_{12}}-x_1^{\alpha_{1}}x_{3}^{\alpha_{23}-\alpha_{13}}x_{4}^{\alpha_{24}}\}$$

\noindent (ii) By Proposition 2.13 in \cite{katsabekis},

\indent (1) if $\alpha_{34} \leq \alpha_{24}$ and $\alpha_{23} < \alpha_{13}$, then 
$$G=\{f_{1},f_{2},f_{3},f_{4},f_{5},f_{6}=x_{2}^{\alpha_{2}+\alpha_{12}}-x_1^{\alpha_{41}}x_{3}^{2\alpha_{23}}x_{4}^{\alpha_{24}-\alpha_{34}}\}$$

\indent (2) if $\alpha_{34} \leq \alpha_{24}$ and $\alpha_{13} \leq \alpha_{23}$, then 
$$G=\{f_{1},f_{2},f_{3},f_{4},f_{5},f_{6}=x_{2}^{\alpha_{2}+\alpha_{12}}-x_1^{\alpha_{1}}x_{3}^{\alpha_{23}-\alpha_{13}}x_{4}^{\alpha_{24}}\}$$

\noindent are standard bases for $I_S$.
Since $\overline{I}=\pi_{i}(I_{S_*})$ which sends $x_1$ to 0, the generators of the defining ideal of $\overline{I}$ is generated by 
$$G_{*}=(x_2^{\alpha_{12}}x_3^{\alpha_{13}},x_3^{\alpha_{23}}x_4^{\alpha_{24}},x_3^{\alpha_3},x_4^{\alpha_{4}},x_2^{\alpha_{12}}x_4^{\alpha_{34}},x_2^{\alpha_{2}+\alpha_{12}}).$$

\noindent  Now, consider the case (i). Since the Betti sequences of   $R/I_{S_*}$ and $\overline{R}/\overline{I}$ are the same which 
follows from Lemma \ref{lemma3.1}, we will show that the sequence,
\begin{center}
$ 0 \rightarrow R^3 \xrightarrow{\phi_3} R^8 \xrightarrow{\phi_2} R^6\xrightarrow{\phi_1} R^1  \rightarrow 0 $
\end{center}
\noindent is a minimal free resolution of $\overline{R}/\overline{I}$, where

{\footnotesize
$$
\phi_1=
\begin{pmatrix}
x_2^{\alpha_{12}}x_3^{\alpha_{13}} &
x_3^{\alpha_{23}}x_4^{\alpha_{24}}&
x_{3}^{\alpha_{3}}&
x_{4}^{\alpha_{4}}&
x_{2}^{\alpha_{12}}x_{4}^{\alpha_{34}}&
x_{2}^{\alpha_{2}+\alpha_{12}} 
\end{pmatrix},
$$\\[-7mm]

$$
\phi_2=
\begin{pmatrix}
x_3^{\alpha_{23}}  & x_2^{\alpha_{23}-\alpha_{13}}x_4^{\alpha_{24}} &   x_4^{\alpha_{34}} & x_2^{\alpha_{2}} &  0  & 0  & 0 &  0 \\[0.5mm]
0 & -x_2^{\alpha_{12}} & 0  & 0 &  -x_3^{\alpha_{13}}  &  x_4^{\alpha_{34}} & 0  & 0 \\[0.5mm]
-x_2^{\alpha_{12}} &  0  & 0 & 0  & x_4^{\alpha_{24}}  & 0 & 0 & 0 \\[0.5mm]
0 & 0 & 0 & 0 & 0 & -x_3^{\alpha_{23}} & -x_2^{\alpha_{12}} & 0  \\[0.5mm]
0 & 0 &  -x_3^{\alpha_{13}} & 0 & 0 & 0 &  x_4^{\alpha_{24}} &  x_2^{\alpha_{2}} \\[0.5mm]
0 & 0 & 0 & -x_3^{\alpha_{13}} & 0 & 0 & 0 &-x_4^{\alpha_{34}} \\
\end{pmatrix},
$$\\[-7mm]

$$
\phi_3=
\begin{pmatrix}
x_{4}^{\alpha_{24}} & 0 & 0\\[0.5mm]
-x_{3}^{\alpha_{13}} &   x_{4}^{\alpha_{34}} & 0 \\[0.5mm]
0 & -x_{3}^{\alpha_{23}-\alpha_{13}}x_4^{\alpha_{24}} & x_2^{\alpha_{2}} \\[0.5mm]
0 & 0  &  -x_{4}^{\alpha_{34}} \\[0.5mm]
x_{2}^{\alpha_{12}} &  0 & 0 \\[0.5mm]
0 & x_{2}^{\alpha_{12}} &  0 \\[0.5mm]
0 & -x_{3}^{\alpha_{23}} & 0  \\[0.5mm]
0 & 0 & x_{3}^{\alpha_{13}}
\end{pmatrix}.
$$
}

\noindent Since $\phi_1\phi_2=\phi_2\phi_3=0$, the  sequence above is a complex. 
One can easily check that $rank(\phi_1)=1$,  $rank(\phi_2)=1$  and $rank(\phi_3)=3$. 

As the 5-minors of $\phi_2$ , we have  $-x_3^{2\alpha_{3}+\alpha_{13}}$ by removing the columns  2, 7, 8 
and the row 3, and  $-x_4^{2{\alpha_{4}+\alpha_{34}}}$ by removing the columns 1, 2, 4 and the row 4. 
These two determinants are relatively prime, so $I(\phi_2)$ contains a regular sequence of length 2. 
As the 3-minors of $\phi_3$, we get  $x_2^{\alpha_{2}+2\alpha_{12}}$ by removing the rows 1, 2, 4, 7, 8, and  
$x_3^{\alpha_{3}+\alpha_{13}}$ by removing the rows 1, 3, 4, 5, 6, and finally $-x_4^{\alpha_{4}+\alpha_{34}}$
by removing the rows 3, 5, 6, 7, 8. These three determinants constitute a regular sequence. 
Thus, $I(\phi_3)$ contains a regular sequence of length 3.
 
The other cases (ii)-(1) and (2) can be done similarly and are, hence, omitted.
\end{proof}

%Case2(b)%

\vspace{5mm}\noindent\textbf{Case 2(b) :} Suppose that $I_S$ is minimally generated by the polynomials
$$f_{1}=x_{1}^{\alpha_{1}}-x_{2}^{\alpha_{12}}x_{3}^{\alpha_{13}},
\hspace{7mm}f_{2}=x_{2}^{\alpha_{2}}-x_{1}^{\alpha_{21}}x_{4}^{\alpha_{24}},
\hspace{7mm}f_{3}=x_{3}^{\alpha_{3}}-x_{2}^{\alpha_{32}}x_{4}^{\alpha_{34}}, $$ \\[-9mm]
$$f_{4}=x_{4}^{\alpha_{4}}-x_{1}^{\alpha_{41}}x_{3}^{\alpha_{43}},
\hspace{7mm}f_{5}=x_{1}^{\alpha_{41}}x_{2}^{\alpha_{32}}-x_{3}^{\alpha_{13}}x_{4}^{\alpha_{24}}.$$

\noindent where $\alpha_{1}=\alpha_{21}+\alpha_{41}$,  $\alpha_{2}=\alpha_{12}+\alpha_{32}$,
 $\alpha_{3}=\alpha_{13}+\alpha_{43}$,  $\alpha_{4}=\alpha_{24}+\alpha_{34}.$
The condition $n_1<n_2<n_3<n_4$ implies
$\alpha_1>\alpha_{12}+\alpha_{13}$ and $ \alpha_4<\alpha_{41}+\alpha_{43}$.

Under the restriction $\alpha_{2} \leq \alpha_{21}+\alpha_{24}$ 
by Theorem \ref{thm2.2} and one possible condition $\alpha_{3} \leq \alpha_{32}+\alpha_{34}$, 
the Betti sequences for the Cohen-Macaulay tangent cone of $C_S$ are $(1,5,5,1)$ and $(1,5,6,2)$, 
as given in \cite{mete-zengin}. Here, the other possible condition $\alpha_{3} >  \alpha_{32}+\alpha_{34}$ 
will be considered. 

\begin{theorem}\label{thm3.4} The Betti sequence of the tangent cone $R/I_{S_*}$ of $C_S$ is $(1,6,8,3)$, 
if $I_S$ is given as in Case 2(b) when $\alpha_{3} >  \alpha_{32}+\alpha_{34}$.
\end{theorem}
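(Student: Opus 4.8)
The plan is to imitate step by step the proofs of Theorems \ref{thm3.2} and \ref{thm3.3}. First I would split into subcases according to the dichotomies appearing in Theorem \ref{thm2.2}(4), namely $\alpha_{34}<\alpha_{24}$ versus $\alpha_{24}\le\alpha_{34}$ and $\alpha_{12}\le\alpha_{32}$ versus $\alpha_{32}<\alpha_{12}$, and read off from \cite{katsabekis} the corresponding standard basis $G=\{f_1,\dots,f_5,f_6\}$ of $I_S$, where the only new generator has the shape $f_6=x_3^{\alpha_3+\alpha_{13}}-x_1^{(\cdot)}x_2^{(\cdot)}x_4^{(\cdot)}$ with a positive power of $x_1$ and with the remaining exponents depending on the subcase. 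In each subcase the Cohen--Macaulayness conditions of Theorem \ref{thm2.2}(4) force $\alpha_3+\alpha_{13}$ to be no larger than the degree of the $x_1$-monomial, so that $x_3^{\alpha_3+\alpha_{13}}$ is the initial form of $f_6$; this is the exact analogue of the degree comparison used tacitly in the proof of Theorem \ref{thm3.2}. Since the multiplicity of $C_S$ is $n_1$, applying $\pi_1$ (which sends $x_1\mapsto0$) to the initial ideal then gives, uniformly in the subcases,
\[
\overline{I}=\pi_1(I_{S_*})=\bigl(x_2^{\alpha_{12}}x_3^{\alpha_{13}},\,x_2^{\alpha_2},\,x_2^{\alpha_{32}}x_4^{\alpha_{34}},\,x_4^{\alpha_4},\,x_3^{\alpha_{13}}x_4^{\alpha_{24}},\,x_3^{\alpha_3+\alpha_{13}}\bigr)\subseteq\overline{R}=k[x_2,x_3,x_4],
\]
the $x_1$-monomial of $f_6$ being killed by $\pi_1$. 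By Lemma \ref{lemma3.1} it then suffices to show that $\beta(\overline{R}/\overline{I})=(1,6,8,3)$.

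For this I would exhibit an explicit complex
\[
0\longrightarrow \overline{R}^{3}\xrightarrow{\phi_3}\overline{R}^{8}\xrightarrow{\phi_2}\overline{R}^{6}\xrightarrow{\phi_1}\overline{R}^{1}\longrightarrow 0,
\]
with $\phi_1$ the row of the six monomial generators of $\overline{I}$ and $\phi_2,\phi_3$ assembled from the first and second syzygies of these monomials --- for instance $x_3^{\alpha_3}\cdot x_2^{\alpha_{12}}x_3^{\alpha_{13}}=x_2^{\alpha_{12}}\cdot x_3^{\alpha_3+\alpha_{13}}$ and $x_3^{\alpha_3}\cdot x_3^{\alpha_{13}}x_4^{\alpha_{24}}=x_4^{\alpha_{24}}\cdot x_3^{\alpha_3+\alpha_{13}}$ give two of the columns of $\phi_2$, and similarly for the rest --- in practice obtained with SINGULAR, or by relabelling the matrices already displayed in the proofs of Theorems \ref{thm3.2} and \ref{thm3.3}. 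One then checks $\phi_1\phi_2=0$ and $\phi_2\phi_3=0$ by direct multiplication, so the sequence is a complex, and verifies the rank equalities $rank(\phi_1)=1$, $rank(\phi_2)=5$, $rank(\phi_3)=3$ (which are exactly the ones compatible with exactness, since $1+5=6$ and $5+3=8$). The acyclicity half of the Buchsbaum--Eisenbud criterion \cite{buchsbaum-eisenbud} then reduces to exhibiting regular sequences in the determinantal ideals: among the $5\times5$ minors of $\phi_2$ one points to two monomials that are pure powers of two distinct variables among $x_2,x_3,x_4$, giving a regular sequence of length $2$ in $I(\phi_2)$, and among the $3\times3$ minors of $\phi_3$ one points to pure powers of $x_2$, of $x_3$ and of $x_4$, a regular sequence of length $3$ in $I(\phi_3)$ (the maximal possible in $\overline{R}$), while $I(\phi_1)=\overline{I}\ne0$ trivially has grade at least $1$. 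Since every entry of the $\phi_i$ lies in the maximal ideal $(x_2,x_3,x_4)$, the complex is a minimal free resolution, so $\beta(R/I_{S_*})=\beta(\overline{R}/\overline{I})=(1,6,8,3)$; the remaining subcases yield the same $\overline{I}$ and hence the same conclusion.

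The step I expect to be the real work is writing down $\phi_2$ and $\phi_3$ explicitly --- that is, producing a minimal free resolution of the monomial ideal $\overline{I}$ and then, inside the fairly large ideals of maximal minors, locating a coprime family of pure powers of the three variables. Everything afterwards is mechanical: two matrix products that must vanish, three or four small determinants, and the observation that the entries are non-units. A minor preliminary point is to confirm, separately in each of the subcases of Theorem \ref{thm2.2}(4), that the Cohen--Macaulayness hypotheses genuinely force $x_3^{\alpha_3+\alpha_{13}}\in\overline{I}$, so that the displayed $\phi_1$ really presents $\overline{I}$.
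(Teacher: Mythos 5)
Your proposal follows the paper's proof essentially verbatim: the same subcase split via Katsabekis's standard bases, the same ideal $\overline{I}=(x_2^{\alpha_{12}}x_3^{\alpha_{13}},x_2^{\alpha_2},x_2^{\alpha_{32}}x_4^{\alpha_{34}},x_4^{\alpha_4},x_3^{\alpha_{13}}x_4^{\alpha_{24}},x_3^{\alpha_3+\alpha_{13}})$ after applying Lemma \ref{lemma3.1}, the same complex $0\to R^3\to R^8\to R^6\to R\to 0$, and the same Buchsbaum--Eisenbud verification via ranks $1,5,3$ and coprime pure-power minors. The only content you defer --- the explicit $\phi_2$ and $\phi_3$ and the choice of minors --- is exactly what the paper writes out, and your sample syzygy columns agree with the paper's matrices, so the plan is sound and complete in outline.
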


\begin{proof} Suppose that $I_S$ is given as in the Case 2(b). 

\noindent (i) By Proposition 2.16 in \cite{katsabekis},
if $\alpha_{34} < \alpha_{24}$ and $\alpha_{12} \leq \alpha_{32}$, then
$$G=\{f_{1},f_{2},f_{3},f_{4},f_{5},f_{6}=x_{3}^{\alpha_{3}+\alpha_{13}}-x_1^{\alpha_{1}}x_{2}^{\alpha_{32}-\alpha_{12}}x_{4}^{\alpha_{34}}\}$$

\noindent (ii) By Proposition 2.18 in \cite{katsabekis}, 
  
\indent (1) if $\alpha_{24} \leq \alpha_{34}$ and $\alpha_{32} < \alpha_{12}$, then 
$$G=\{f_{1},f_{2},f_{3},f_{4},f_{5},f_{6}=x_{3}^{\alpha_{3}+\alpha_{13}}-x_1^{\alpha_{41}}x_{2}^{2\alpha_{32}}x_{4}^{\alpha_{34}-\alpha_{24}}\},$$

\indent (2) if $\alpha_{24} \leq \alpha_{34}$ and $\alpha_{12} \leq \alpha_{32}$, then 
$$G=\{f_{1},f_{2},f_{3},f_{4},f_{5},f_{6}=x_{3}^{\alpha_{3}+\alpha_{13}}-x_1^{\alpha_{1}}x_{2}^{\alpha_{32}-\alpha_{12}}x_{4}^{\alpha_{34}}\}$$
\noindent are standard bases for $I_S$.
Since $\overline{I}=\pi_{i}(I_{S_*})$ which sends $x_1$ to 0, then the generators of the defining ideal of $\overline{I}$ is generated by 

%I.DURUM
$$G_{*}=(x_2^{\alpha_{12}}x_3^{\alpha_{13}},  x_2^{\alpha_{2}}, x_2^{\alpha_{32}}x_4^{\alpha_{34}}, x_4^{\alpha_{4}}, x_3^{\alpha_{13}}x_4^{\alpha_{24}},x_3^{\alpha_{3}+\alpha_{13}}).$$

\noindent  Now, consider the case (i). Since the Betti sequences of   $R/I_{S_*}$ and $\overline{R}/\overline{I}$ are the same which 
follows from Lemma \ref{lemma3.1}, we will show that the sequence,
\begin{center}
$ 0 \rightarrow R^3 \xrightarrow{\phi_3} R^8 \xrightarrow{\phi_2} R^6\xrightarrow{\phi_1} R^1  \rightarrow 0 $
\end{center}
\noindent is a minimal free resolution of $\overline{R}/\overline{I}$, where

{\footnotesize
$$
\phi_1=
\begin{pmatrix}
x_2^{\alpha_{12}}x_3^{\alpha_{13}} &
x_2^{\alpha_{2}}&
x_2^{\alpha_{32}}x_4^{\alpha_{34}}&
x_{4}^{\alpha_{4}}&
x_{3}^{\alpha_{13}}x_{4}^{\alpha_{24}}&
x_{3}^{\alpha_{3}+\alpha_{13}} 
\end{pmatrix},
$$\\[-7mm]

$$
\phi_2=
\begin{pmatrix}
x_4^{\alpha_{24}}  & x_2^{\alpha_{32}} &   x_3^{\alpha_{3}} & 0  &  0  & 0  & 0 &  0 \\[0.5mm]
0 & -x_3^{\alpha_{13}} & 0  & 0 &  0  &  0  & x_4^{\alpha_{34}}  & 0 \\[0.5mm]
0  &  0  & 0 & 0  & -x_3^{\alpha_{13}} &  0 & -x_2^{\alpha_{12}}   & -x_4^{\alpha_{24}}  \\[0.5mm]
0 & 0 & 0 &-x_3^{\alpha_{13}} & 0  & 0 & 0 &  x_2^{\alpha_{32}}  \\[0.5mm]
-x_2^{\alpha_{12}} & 0 & 0  &  x_4^{\alpha_{34}}& x_2^{\alpha_{32}}x_4^{\alpha_{34}-\alpha_{24}}  &  x_3^{\alpha_{3}} & 0 & 0  \\[0.5mm]
0 & 0 &  -x_2^{\alpha_{12}} &0  & 0 & -x_4^{\alpha_{24}} & 0 & 0  \\
\end{pmatrix},
$$\\[-7mm]

$$
\phi_3=
\begin{pmatrix}
x_{3}^{\alpha_{3}} & -x_{2}^{\alpha_{32}}x_4^{\alpha_{34}-\alpha_{24}} & 0\\[0.5mm]
0  &   x_{4}^{\alpha_{34}} & 0 \\[0.5mm]
-x_{4}^{\alpha_{24}}&  0  & 0 \\[0.5mm]
0 & 0  &  x_{2}^{\alpha_{32}} \\[0.5mm]
0  &  -x_{2}^{\alpha_{12}} & -x_{4}^{\alpha_{24}} \\[0.5mm]
x_{2}^{\alpha_{12}} & 0 &  0 \\[0.5mm]
0 & x_{3}^{\alpha_{13}} & 0  \\[0.5mm]
0 & 0 & x_{3}^{\alpha_{13}}
\end{pmatrix}.
$$
}

\noindent Since $\phi_1\phi_2=\phi_2\phi_3=0$, the sequence above is a complex. 
One can easily check that $rank(\phi_1)=1$, $rank(\phi_2)=5$ and  $rank(\phi_3)=3$. 
As the 5-minors of $\phi_2$, we get $-x_3^{2\alpha_{3}+3\alpha_{13}}$ by removing
the row 6 and the columns 1, 7, 8, and  $x_4^{2{\alpha_{4}+\alpha_{24}}}$ by removing
the row 4 and  the columns 2, 3, 5. These two determinants are relatively prime, 
so $I(\phi_2)$ contains a regular sequence of length 2. As the 3-minors of $\phi_3$, 
we have $-x_2^{\alpha_{2}+\alpha_{12}}$ by removing the rows 1, 2, 3, 7, 8, and 
 $x_3^{\alpha_{3}+2\alpha_{13}}$ by removing the rows 2, 3, 4, 5, 6, and finally, 
 $-x_4^{\alpha_{4}+\alpha_{24}}$ by removing the rows 1, 4, 6, 7, 8.  These three 
 determinants constitute a regular sequence. Thus, $I(\phi_3)$ contains a regular 
 sequence of length 3.

The other cases (ii)-(1) and (2) can be done similarly and are, hence, omitted.
\end{proof}

%Case 3(a)%
\vspace{5mm}\noindent\textbf{Case 3(a) :} Suppose that $I_S$ is minimally generated by the polynomials
$$f_{1}=x_{1}^{\alpha_{1}}-x_{2}^{\alpha_{12}}x_{4}^{\alpha_{14}},
\hspace{7mm}f_{2}=x_{2}^{\alpha_{2}}-x_{1}^{\alpha_{21}}x_{3}^{\alpha_{23}},
\hspace{7mm}f_{3}=x_{3}^{\alpha_{3}}-x_{1}^{\alpha_{31}}x_{4}^{\alpha_{34}}, $$\\[-9mm]
$$f_{4}=x_{4}^{\alpha_{4}}-x_{2}^{\alpha_{42}}x_{3}^{\alpha_{43}},
\hspace{7mm}f_{5}=x_{1}^{\alpha_{31}}x_{2}^{\alpha_{42}}-x_{3}^{\alpha_{23}}x_{4}^{\alpha_{14}}$$

\noindent where $\alpha_{1}=\alpha_{21}+\alpha_{31}$, \; $\alpha_{2}=\alpha_{12}+\alpha_{42}$, \;
 $\alpha_{3}=\alpha_{23}+\alpha_{43}$,  \; $\alpha_{4}=\alpha_{14}+\alpha_{34}.$
The condition $n_1<n_2<n_3<n_4$ implies $\alpha_1>\alpha_{12}+\alpha_{14}$ and 
$ \alpha_4<\alpha_{42}+\alpha_{43}$.

$C_S$ has Cohen-Macaulay tangent cone only under the conditions 
$\alpha_2 \leq \alpha_{21}+\alpha_{23}$  and $\alpha_3 \leq \alpha_{31}+\alpha_{34}$ 
by \cite{arslan-katsabekis-nalbandiyan}. Mete and Zengin   \cite{mete-zengin} gave the 
explicit minimal free resolution for the tangent cone of $C_S$ and showed that the Betti 
sequence of its tangent cone is $(1,5,6,2)$.

%Case 3(b)%
\vspace{5mm}\noindent\textbf{Case 3(b) :} Suppose that $I_S$ is minimally generated by the polynomials
$$f_{1}=x_{1}^{\alpha_{1}}-x_{2}^{\alpha_{12}}x_{4}^{\alpha_{14}},
\hspace{7mm}f_{2}=x_{2}^{\alpha_{2}}-x_{3}^{\alpha_{23}}x_{4}^{\alpha_{24}},
\hspace{7mm}f_{3}=x_{3}^{\alpha_{3}}-x_{1}^{\alpha_{31}}x_{2}^{\alpha_{32}},$$ \\[-9mm]
$$f_{4}=x_{4}^{\alpha_{4}}-x_{1}^{\alpha_{41}}x_{3}^{\alpha_{43}}, 
\hspace{7mm}f_{5}=x_{2}^{\alpha_{12}}x_{3}^{\alpha_{43}}-x_{1}^{\alpha_{31}}x_{4}^{\alpha_{24}}.$$

\noindent Here, $\alpha_{1}=\alpha_{31}+\alpha_{41}$, \; $\alpha_{2}=\alpha_{12}+\alpha_{32}$, \;
 $\alpha_{3}=\alpha_{23}+\alpha_{43}$,  \; $\alpha_{4}=\alpha_{14}+\alpha_{24}.$
The condition $n_1<n_2<n_3<n_4$ implies
$\alpha_1>\alpha_{12}+\alpha_{14}$, \; $\alpha_2>\alpha_{23}+\alpha_{24}$, \; $\alpha_3 < \alpha_{31}+\alpha_{32}$ 
and $ \alpha_4<\alpha_{41}+\alpha_{43}$.

\begin{theorem}\label{thm3.5} The Betti sequence of the tangent cone $R/I_{S_*}$ of $C_S$ is $(1,6,8,3)$, if $I_S$ is 
given as in the Case 3(b).
\end{theorem}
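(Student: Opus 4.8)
The plan is to follow exactly the template established in Theorems \ref{thm3.2}, \ref{thm3.3} and \ref{thm3.4}. First I would invoke the relevant propositions of Katsabekis \cite{katsabekis} to write down an explicit standard basis $G$ for $I_S$ in Case 3(b), splitting as usual into the subcases $\alpha_{23}<\alpha_{43},\ \alpha_{14}\le\alpha_{24}$ and $\alpha_{43}\le\alpha_{23}$ (the latter again splitting according to $\alpha_{24}<\alpha_{14}$ or $\alpha_{14}\le\alpha_{24}$); each such basis is $\{f_1,\dots,f_5,f_6\}$ with a sixth binomial $f_6$ whose shape is dictated by the Cohen-Macaulayness conditions in Theorem \ref{thm2.2}(6). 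Applying the reduction map $\pi_i$ of Lemma \ref{lemma3.1} (here $x_i=x_1$, since the multiplicity equals $n_1$ as in the sibling cases), I would record the six monomial generators of $\overline I = \pi_1(I_{S_*})$, obtained by taking the initial forms of $f_1,\dots,f_6$ and killing $x_1$; concretely these should be $\bigl(x_2^{\alpha_{12}}x_4^{\alpha_{14}},\, x_3^{\alpha_{23}}x_4^{\alpha_{24}},\, x_2^{\alpha_{32}}x_3^{\alpha_{23}} \text{ or similar},\, x_3^{\alpha_{43}}x_4^{?},\, x_2^{\alpha_{12}}x_3^{\alpha_{43}},\, \text{the initial form of } f_6\bigr)$, the precise exponents to be read off from the standard basis. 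The key point is that $\overline R/\overline I$ is a quotient by a monomial ideal in only three variables, so its minimal free resolution is forced to have length $3$ and length $\le 3$ is automatic; the content is that the Betti numbers are $(1,6,8,3)$.

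Next I would exhibit explicit matrices $\phi_1\in R^{1\times 6}$, $\phi_2\in R^{6\times 8}$, $\phi_3\in R^{8\times 3}$ over $R=k[x_1,\dots,x_4]$ (with only $x_2,x_3,x_4$ actually appearing) and check the complex conditions $\phi_1\phi_2=0$ and $\phi_2\phi_3=0$ by direct computation. I would build these matrices by mimicking the Koszul-type syzygy patterns of Theorems \ref{thm3.2}--\ref{thm3.4}: the columns of $\phi_2$ encode the obvious degree-two syzygies between pairs of the six monomial generators (each such syzygy being $m_{jk}\cdot e_j - m_{kj}\cdot e_k$ where $m_{jk}$ is the cofactor needed to lift to the $\mathrm{lcm}$), together with the two extra syzygies created by the relations $f_6$ bears to $f_3$ and $f_5$; the columns of $\phi_3$ encode the second syzygies among those. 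The SINGULAR verification mentioned in the introduction is what would have produced these matrices in the first place, so writing them down and confirming $\phi_1\phi_2=\phi_2\phi_3=0$ is routine bookkeeping.

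Then I would verify exactness and minimality via the Buchsbaum--Eisenbud acyclicity criterion \cite{buchsbaum-eisenbud}: compute the expected ranks $\mathrm{rank}\,\phi_1=1$, $\mathrm{rank}\,\phi_3=3$, and $\mathrm{rank}\,\phi_2 = 6-1 = 5$; then exhibit, among the maximal minors, two coprime monomials inside $I(\phi_2)$ (so $\mathrm{grade}\,I(\phi_2)\ge 2$) and three pairwise-coprime monomials inside $I(\phi_3)$ (so $\mathrm{grade}\,I(\phi_3)\ge 3$). Coprimality is immediate because these minors will be pure powers of $x_2$, $x_3$ and $x_4$ respectively, exactly as in the displayed computations of the earlier cases. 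Minimality is visible from the matrices: every nonzero entry of $\phi_1,\phi_2,\phi_3$ is a monomial of positive degree, so no unit appears and the resolution is minimal, giving $\beta(\overline R/\overline I)=(1,6,8,3)$, hence $\beta(R/I_{S_*})=(1,6,8,3)$ by Lemma \ref{lemma3.1}. Finally I would remark that the two remaining subcases ($\alpha_{43}\le\alpha_{23}$) are handled by the identical argument with $f_6$ and the corresponding entries of $\phi_2,\phi_3$ replaced by their analogues, and are therefore omitted.

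The main obstacle is purely organizational rather than conceptual: one must guess the correct $8$ first syzygies and $3$ second syzygies so that the complex is exact and minimal, i.e. reverse-engineer the matrices $\phi_2,\phi_3$ from the monomial ideal $\overline I$. Since $\overline I$ lives in three variables the Taylor complex gives an a priori (non-minimal) resolution, and the genuine work is to prune it down to the minimal one; but because the generators of $\overline I$ and their lcm-lattice are explicit, this is a finite check that SINGULAR already certifies. No new idea beyond Theorems \ref{thm3.2}--\ref{thm3.4} is needed.
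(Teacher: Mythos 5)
Your strategy is precisely the one the paper follows: invoke Katsabekis's Propositions 2.21 and 2.23 for the standard bases in the three subcases, reduce modulo $x_1$ via Lemma \ref{lemma3.1}, exhibit a length-three complex $0\to R^3\to R^8\to R^6\to R^1\to 0$, and certify exactness by the Buchsbaum--Eisenbud criterion using pure-power maximal minors of $\phi_2$ and $\phi_3$, with minimality read off from the absence of unit entries. So there is no divergence in method.

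However, as written the proposal has a genuine gap: for a theorem whose entire content is an explicit minimal free resolution, the matrices $\phi_2$ and $\phi_3$ \emph{are} the proof, and you have not produced them --- you only assert that they can be reverse-engineered from the lcm-lattice or certified by SINGULAR. Moreover, your provisional list of generators of $\overline{I}$ is partly wrong: in Case 3(b) the inequalities $\alpha_3<\alpha_{31}+\alpha_{32}$ and $\alpha_4<\alpha_{41}+\alpha_{43}$ (forced by $n_1<n_2<n_3<n_4$) mean the leading forms of $f_3$ and $f_4$ are the pure powers $x_3^{\alpha_3}$ and $x_4^{\alpha_4}$, which survive $\pi_1$ unchanged; the correct list is
\[
(x_2^{\alpha_{12}}x_4^{\alpha_{14}},\ x_3^{\alpha_{23}}x_4^{\alpha_{24}},\ x_3^{\alpha_{3}},\ x_4^{\alpha_{4}},\ x_2^{\alpha_{12}}x_3^{\alpha_{43}},\ x_2^{\alpha_{2}+\alpha_{12}}),
\]
not the mixed monomials you tentatively wrote for the third and fourth entries. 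Since the entire syzygy computation hinges on this list, the error would propagate into $\phi_2$ and $\phi_3$ and into the choice of minors. Until the six generators are fixed correctly and the two matrices, together with the specific $5$- and $3$-minors (a power of $x_3$ and a power of $x_4$ for $I(\phi_2)$; powers of $x_2$, $x_3$, $x_4$ for $I(\phi_3)$), are written out and verified, the argument is a plan rather than a proof.
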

 
\begin{proof} Suppose that $I_S$ is given as in the Case 3(b). 

\noindent (i) By Proposition 2.21 in \cite{katsabekis}, if $\alpha_{23} < \alpha_{43}$ and $\alpha_{14} \leq \alpha_{24}$, then
$$\{f_1,f_2,f_3,f_4,f_5,f_{6}=x_{2}^{\alpha_{2}+\alpha_{12}}-x_1^{\alpha_{1}}x_{3}^{\alpha_{23}}x_{4}^{\alpha_{24}-\alpha_{14}}\},$$

\noindent (ii) By Proposition 2.23 in \cite{katsabekis},

\indent (1) if $\alpha_{43} \leq \alpha_{23}$ and $\alpha_{24} < \alpha_{14}$ then 
$$\{f_1,f_2,f_3,f_4,f_5,f_{6}=x_{2}^{\alpha_{2}+\alpha_{12}}-x_1^{\alpha_{31}}x_{3}^{\alpha_{23}-\alpha_{43}}x_{4}^{2\alpha_{24}}\},$$

\indent (2) if $\alpha_{43} \leq \alpha_{23}$ and $\alpha_{14} \leq \alpha_{24}$ then
$$\{f_1,f_2,f_3,f_4,f_5,f_{6}=x_{2}^{\alpha_{2}+\alpha_{12}}-x_1^{\alpha_{1}}x_{3}^{\alpha_{23}}x_{4}^{\alpha_{24}-\alpha_{14}}\}$$
\noindent are  standard bases for $I_S$. Since $\overline{I}=\pi_{i}(I_{S_*})$ which sends $x_1$ to 0, then the generators of the 
defining ideal of $\overline{I}$ is generated by 
%I.DURUM
$$G_{*}=(x_2^{\alpha_{12}}x_4^{\alpha_{14}},   x_3^{\alpha_{23}}x_4^{\alpha_{24}}, x_3^{\alpha_{3}}, x_4^{\alpha_{4}}, x_2^{\alpha_{12}}x_3^{\alpha_{43}},x_2^{\alpha_{2}+\alpha_{12}}).$$

\noindent  Now, consider the case (i). Since the Betti sequences of   $R/I_{S_*}$ and $\overline{R}/\overline{I}$ are the same which 
follows from Lemma \ref{lemma3.1}, we will show that the sequence,
\begin{center}
$ 0 \rightarrow R^3 \xrightarrow{\phi_3} R^8 \xrightarrow{\phi_2} R^6\xrightarrow{\phi_1} R^1  \rightarrow 0 $
\end{center}
\noindent is a minimal free resolution of $\overline{R}/\overline{I}$, where

{\footnotesize
$$
\phi_1=
\begin{pmatrix}
x_2^{\alpha_{12}}x_4^{\alpha_{14}} &
x_3^{\alpha_{23}}x_4^{\alpha_{24}}&
x_3^{\alpha_{3}}&
x_{4}^{\alpha_{4}}&
x_{2}^{\alpha_{12}}x_{3}^{\alpha_{43}}&
x_{2}^{\alpha_{2}+\alpha_{12}} 
\end{pmatrix},
$$\\[-7mm]

$$
\phi_2=
\begin{pmatrix}
x_3^{\alpha_{43}}  & x_3^{\alpha_{23}} x_4^{\alpha_{24}-\alpha_{14}} &   x_4^{\alpha_{24}} & x_2^{\alpha_{2}}  &  0  & 0  & 0 &  0 \\[0.5mm]
0 & -x_2^{\alpha_{12}} & 0  & 0 &  0  &  0  & -x_3^{\alpha_{43}}  & x_4^{\alpha_{14}}  \\[0.5mm]
0  &  0  & 0 & 0  & -x_2^{\alpha_{12}} &  0 & x_4^{\alpha_{24}}   & 0 \\[0.5mm]
0 & 0 & -x_2^{\alpha_{12}} & 0 & 0  & 0 & 0 &  -x_3^{\alpha_{23}}  \\[0.5mm]
-x_4^{\alpha_{14}} & 0 & 0  & 0 & x_3^{\alpha_{23}}  &  x_2^{\alpha_{2}} & 0 & 0  \\[0.5mm]
0 & 0 &  0  & -x_4^{\alpha_{14}}  & 0 & -x_3^{\alpha_{43}} & 0 & 0  \\
\end{pmatrix},
$$\\[-7mm]

$$
\phi_3=
\begin{pmatrix}
x_{2}^{\alpha_{2}} & 0 & x_{3}^{\alpha_{23}}x_4^{\alpha_{24}-\alpha_{14}}\\[0.5mm]
0  &   x_{4}^{\alpha_{14}}  & -x_3^{\alpha_{43}} \\[0.5mm]
0 &  -x_{3}^{\alpha_{23}}  & 0 \\[0.5mm]
-x_{3}^{\alpha_{43}} & 0  & 0 \\[0.5mm]
0  &  0 & x_{4}^{\alpha_{24}} \\[0.5mm]
x_{4}^{\alpha_{14}} & 0 &  0 \\[0.5mm]
0 & 0 & x_{2}^{\alpha_{12}}   \\[0.5mm]
0 & x_{2}^{\alpha_{12}}  & 0
\end{pmatrix}.
$$
}

\noindent Since $\phi_1\phi_2=\phi_2\phi_3=0$, the sequence above is a complex. 
It is easy to show that $rank(\phi_1)=1$, $rank(\phi_2)=5$ and $rank(\phi_3)=3$. 
As the 5-minors of $\phi_2$, we have $-x_3^{2\alpha_{3}+\alpha_{43}}$ by removing
the row 3 and the columns 2, 3, 4, and $x_4^{2{\alpha_{4}+\alpha_{14}}}$ by removing
the row 4 and  the columns 2, 5, 6. These two determinants are relatively prime, so 
$I(\phi_2)$ contains a regular sequence of length 2. As the 3-minors of $\phi_3$, we get 
$-x_2^{\alpha_{2}+2\alpha_{12}}$ by removing the rows 2, 3, 4, 5, 6, and 
$x_3^{\alpha_{3}+\alpha_{43}}$ by removing the rows 1, 5, 6, 7, 8,
and finally, $x_4^{\alpha_{4}+\alpha_{14}}$ by removing the rows 1, 3, 4, 7, 8. These three 
determinants constitute a regular sequence. Thus, $I(\phi_3)$ contains a regular sequence 
of length 3.

The other cases (ii)-(1) and (2) can be done similarly and are, hence, omitted.
\end{proof}

%%%%% BIBLIOGRAPHY %%%%%%%
\bibliographystyle{amsplain}
%\section*{\refname}

\end{document}